\theoremstyle{plain}
\newcounter{qncount}
\newcounter{solncount}
\newtheorem{theorem}{Theorem}
\newtheorem{remark}[theorem]{Remark}
\newtheorem{question}[qncount]{Question}
\newtheorem{solution}[solncount]{Solution}
\newcommand\PP{{\mathbb P}}
\renewcommand\Pr{\PP}
\newcommand\qq{\qquad}
\newcommand\oo{\infty}
\newcommand\resp{respectively}
\newcounter{mycount1}\newcounter{mycount2}\newcounter{mycount3}\newcounter{mycount}
\newenvironment{numlist}{\begin{list}{\rm\arabic{mycount2}.}%
   {\usecounter{mycount2}\labelwidth=1cm\itemsep 0pt}}{\end{list}}
\numberwithin{equation}{section}
\numberwithin{theorem}{section}
\numberwithin{figure}{section}
\title[The lost boarding pass, and other practical problems]{The lost boarding pass,\\ and other practical problems}
\author{Geoffrey R. Grimmett, David R. Stirzaker}
\address{Statistical Laboratory, Centre for
Mathematical Sciences, Cambridge University, Wilberforce Road,
Cambridge CB3 0WB, UK} 
\email{g.r.grimmett@statslab.cam.ac.uk}
\urladdr{\url{http://www.statslab.cam.ac.uk/~grg/}}
\address{St John's College,
Oxford OX1 3JP, UK}
\email{david.stirzaker@sjc.ox.ac.uk}
\urladdr{\url{https://www.sjc.ox.ac.uk/discover/people/professor-david-stirzaker/}}
\begin{document}
\begin{abstract}
The reader is reminded of several puzzles involving randomness. These may be ill-posed,
and if well-posed there is sometimes a solution that uses probabilistic intuition in
a special way. Various examples are presented including the well known
problem of the lost boarding pass: what is the probability that the last passenger
boarding a fully booked plane sits in the assigned seat if the first passenger has occupied a randomly chosen seat?
This problem, and its striking answer of $\frac12$, has attracted  a good deal of attention since around 2000.
We review elementary solutions to this, and to the more general problem of finding the probability
the $m$th passenger sits in the assigned seat when in the presence of some number $k$ of passengers with lost
boarding passes. A simple proof is presented of the independence of the occupancy status of different seats,
and a connection to the Poisson--Dirichlet distribution is mentioned. 
\end{abstract}

\date{October 6, 2019} 

\keywords{Lost boarding pass problem, family planning, Bertrand's paradox, Monty Hall problem, Red Now,
Poisson--Dirichlet distribution}
\subjclass[2010]{60C05}
\maketitle

\section{Puzzles in probability}

The world is amply supplied with probability teasers. These can mystify and intoxicate. Here are some 
classics\footnote{All problems in this article  are considered in some detail in Grimmett
and Stirzaker \cite[Chaps 1, 3, 4, 12]{otep2e, prp3e}.}.
\begin{numlist}
\item {(Family planning)} A family has three children, two of whom are boys. What is the probability the third is a boy?
\item {(Bertrand's paradox)} A chord of the unit circle is picked at random. What is the
probability that an equilateral triangle with the chord as base can fit inside the circle?
\item {(Monty Hall problem)} Monty Hall shows you three doors concealing, \resp, two goats and one Cadillac, and he asks you
to choose a door and take whatever is behind it. Without opening your door, he opens another and displays a goat. 
Is it to your advantage to accept his invitation to shift your choice to the third door?
\end{numlist}
These three questions have the common feature of being \lq ill posed': insufficient information
is given for the answer to be properly determined.

There are further teasers that tempt the reader to indulge in sometimes over-complex calculations. Here is one such.

\begin{numlist}
\item [4.] {(Red Now)}
This game is played by a single player with
a well shuffled conventional pack of 52 playing cards. At times $n=1,2,\dots,52$
the player turns over a new card and observes its colour.
Exactly once in the game s/he must say, just before exposing a
card, ``Red Now''. The game is won if the next exposed card is red.
Is there a strategy
for the player that results in a probability of winning different
from $\frac12$?
\end{numlist}

It will be tempting to embark on a calculation that analyses the success 
probability of a general strategy, possibly using
some fairly heavyweight machinery such as optional stopping. 
One would need firstly to decide what exactly makes  a \lq strategy', and secondly 
which strategies to prioritise (such as waiting until
there is a given preponderance of reds in the unseen cards).
There is however a beautiful 
\lq one-line' solution\footnote{Explained
to one of the authors by an undergraduate following a lecture.}
that finesses such complexities as follows,
at the price of employing a touch of what might be called \lq probabilistic intuition'.
It may be seen that the chance of winning is the same
for a player who, after calling “Red Now”, picks the card placed at the bottom of the pack rather than
that at the top. At the start of this game, 
the bottom card is red with probability $\frac12$ irrespective of the strategy of the player.

We amplify this discussion with a more detailed analysis in Sections \ref{sec:int}--\ref{sec:many}
of the well known \lq lost boarding pass problem'\footnote{See \cite[Prob.\ 1.8.39]{otep2e, prp3e}.}. 
This provides a
surprising illustration of the probabilistic property of \lq independence' of events, and
reveals the full structure of the random mapping of passengers to seats.

\section{One lost boarding pass}\label{sec:int}

\begin{question}\label{q1}
(Lost boarding pass problem) A plane has $n\ge 2$ seats, and is fully booked. 
A line of $n$ people board one by one. Each has an assigned seat, but the first passenger 
(doubtless a mathematician thinking of other things) has
lost his or her boarding pass and sits in a seat chosen at random.  On boarding,
subsequent passengers sit in their assigned seats if free, 
and in randomly chosen empty seats if not. What is the probability that
the last passenger sits in his or her assigned seat?
\end{question}

\begin{figure}
\includegraphics[width=0.9\textwidth]{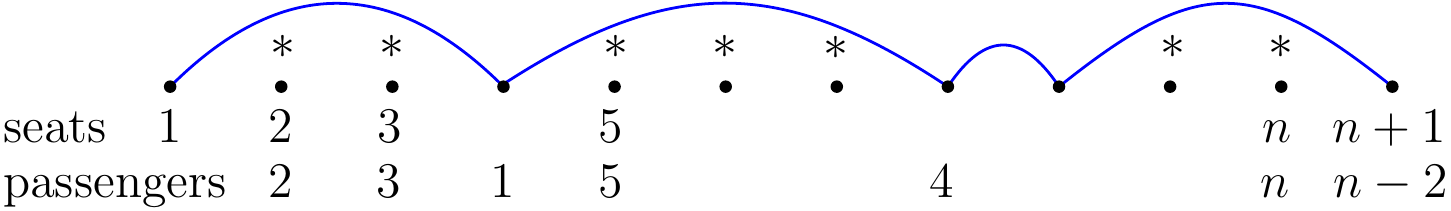}
\caption{Seats are labelled $1,2,\dots,n$, and the extra label $n+1$ represents a copy of seat $1$.
In this example, passenger $1$ sits in seat $4$, passenger $4$ is displaced into a later seat, and the chain of
displacement continues until some displaced passenger chooses seat $n+1$ ($\equiv 1$).
An asterisk denotes a correctly seated passenger.}\label{fig:1}
\end{figure}

This problem has generated a good deal of discussion.
It is well known that the answer is $\frac12$, and several arguments may be found  for this
in puzzle books and online (see, for example, \cite{stack}). 
The neatest is probably the following 
(attributed by Peter Winkler \cite[p.\ 35]{Wink} in 2004
to Alexander Holroyd, but perhaps known contemporaneously to others). Let the passengers be labelled
$1,2,\dots,n$ in order of boarding, and label their assigned seats similarly (so that passenger $k$ has
been assigned the seat labelled $k$). The seating process is illustrated in
Figure \ref{fig:1}.

\begin{solution}\label{soln1}
When the $n$th passenger chooses
a seat, there is only one available, and this must be either seat $1$ or seat $n$
(since seat $r\in\{2,3,\dots,n-1\}$, if free, would have been claimed earlier by passenger $r$). 
Each of these two possibilities has probability $\frac12$, since no earlier decision has distinguished between them. 
\end{solution}

Some will be uncomfortable with this solution, in that it uses a symmetry rather than an equation.
Such is not atypical of arguments considered by mathematicians to be \lq neat', though they 
frequently have the potential to mystify
the uninitiated (see, for example, \cite[p.\ 176]{bb}).  

The solution presented in \cite{otep2e} is more prosaic than the above,
and utilizes
conditional probability to derive a recurrence relation. Such a method was used by Dermot Roaf \cite{roaf}
to obtain an answer to the following elaboration of Question \ref{q1}.

\begin{question}\label{q2}
What is the probability that passenger $m$ finds seat $m$ to be already occupied?
\end{question}

We do not include Roaf's solution (which uses an equation). Here however is the \lq one-liner'\footnote{Possibly
noted first by Nigussie \cite{Nigussie}.} that extends Solution \ref{soln1}. 

\begin{solution}\label{soln2}
Let $2\le m\le n$. 
When
passenger $m$ chooses a seat, seats $2,3,\dots,m-1$ are already taken (since seat $r\in\{2,3,\dots,m-1\}$, 
if free, would have been claimed earlier by passenger $r$), and one further seat also. 
No information is available about the label of this further seat.
By symmetry, it is equally likely to be any of the $n-m+2$ seats labelled
$1,m,m+1,\dots,n$. Therefore, it is seat $m$
with probability $1/(n-m+2)$.
\end{solution}

Let $D_m$ be the event that passenger $m$ finds seat $m$
already occupied. 
Henze and Last \cite{henze} have
observed that the $D_m$ are independent events. We explain this using the 
simple argument of Solution \ref{soln2}.

\begin{theorem}\label{thm0}
The events $D_2,D_3,\dots,D_n$ are independent.
\end{theorem}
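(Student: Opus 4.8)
The plan is to track the single \lq extra' occupied seat identified in Solution \ref{soln2} as passengers board, and to show that its conditional law, given the record of which earlier events $D_j$ occurred, is always uniform on the currently-available labels. For $2\le m\le n$, let $X_m$ denote the label of the unique occupied seat lying outside $\{2,3,\dots,m-1\}$ at the instant passenger $m$ boards; by Solution \ref{soln2}, $X_m$ takes values in $\{1,m,m+1,\dots,n\}$ and $D_m=\{X_m=m\}$. First I would record the transition rule by conditioning on the seating of the first $m-1$ passengers. If $X_m\ne m$ then seat $m$ is free, passenger $m$ takes it, and the extra seat is unchanged, so $X_{m+1}=X_m$. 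If instead $X_m=m$ (that is, $D_m$ occurs) then passenger $m$ is displaced and chooses uniformly among the empty seats $\{1,m+1,\dots,n\}$; hence $X_{m+1}$ is uniform on $\{1,m+1,\dots,n\}$ and, crucially, independent of the entire history up to that point.

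The heart of the argument is the claim that, conditional on any outcome of $(D_2,\dots,D_{m-1})$, the variable $X_m$ is uniformly distributed on $\{1,m,m+1,\dots,n\}$. I would prove this by locating the most recent \lq reset', namely the largest $k\le m-1$ with $D_k$ occurring. At such a reset the transition rule makes $X_{k+1}$ uniform on $\{1,k+1,\dots,n\}$ and independent of the pre-$k$ record, so the values of the earlier $D_j$ are irrelevant; between the reset and time $m$ no further $D_j$ occurs, so $X_m=X_{k+1}$ and the conditioning merely forbids the labels $k+1,\dots,m-1$. Deleting these atoms from a uniform law leaves $X_m$ uniform on $\{1,k+1,\dots,n\}\setminus\{k+1,\dots,m-1\}=\{1,m,m+1,\dots,n\}$, a set of size $n-m+2$, as required. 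The remaining case, in which no $D_j$ occurs for $j\le m-1$, is identical, with $X_2$ (uniform on $\{1,2,\dots,n\}$) playing the role of the post-reset value, since then $X_m=X_2$ and the conditioning forbids the labels $2,\dots,m-1$.

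Granting this claim, $\Pr(D_m\mid D_2,\dots,D_{m-1})=\Pr(X_m=m\mid\cdots)=1/(n-m+2)$ takes the same value on every atom of $\sigma(D_2,\dots,D_{m-1})$, so by the multiplication rule $\Pr\bigl(\bigcap_{m}D_m^{\varepsilon_m}\bigr)=\prod_m\Pr(D_m^{\varepsilon_m})$ for every choice of $\varepsilon_m\in\{0,1\}$, which is precisely the independence of $D_2,\dots,D_n$. I expect the main obstacle to be the bookkeeping in the conditional-uniformity claim: one must verify that conditioning on the non-occurrence of the intermediate events $D_{k+1},\dots,D_{m-1}$ does no more than restrict a uniform law to a subset (so that it stays uniform), and that the fresh draw at the last reset genuinely decouples $X_m$ from the earlier portion of the $D$-record. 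Once this is in place the conclusion is immediate.
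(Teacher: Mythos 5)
Your proof is correct, and it rests on the same underlying observation as the paper's --- that the one ``extra'' occupied seat of Solution \ref{soln2} is conditionally uniform on the labels still in play --- but you execute it along a genuinely different decomposition of independence. The paper only ever conditions on \emph{occurrences}: it computes $\Pr(D_{m_1}\cap D_{m_2})=\Pr(D_{m_1})\Pr(D_{m_2}\mid D_{m_1})$ and iterates over intersections of the $D_{m_i}$ themselves, which suffices because independence of events is defined through such intersections; the price is an appeal to the informal assertion that ``the choices made so far contain no information about'' the extra seat. You instead prove the stronger statement that $\Pr(D_m\mid A)=1/(n-m+2)$ for \emph{every} atom $A$ of $\sigma(D_2,\dots,D_{m-1})$, including atoms on which some $D_j$ fail, and this forces you to do the extra bookkeeping the paper avoids: the Markov-type process $X_m$, the reset at the last occurrence of a $D_k$ (where the displaced passenger's fresh uniform draw on $\{1,k+1,\dots,n\}$ decouples $X_{k+1}$ from the past), and the observation that conditioning a uniform law on avoiding $\{k+1,\dots,m-1\}$ leaves it uniform on $\{1,m,\dots,n\}$. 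All of these steps check out, including the identification of $\bigcap_{j=k+1}^{m-1}D_j^c$ with $\{X_{k+1}\notin\{k+1,\dots,m-1\}\}$ and the base case with $X_2$ uniform on all $n$ seats. What your route buys is a fully rigorous substitute for the paper's symmetry hand-wave and, as a by-product, the conditional law of the extra seat given the entire record of the $D_j$; what the paper's route buys is brevity, since restricting to intersections of occurrences means the ``no information'' argument of Solution \ref{soln2} can simply be repeated verbatim after each conditioning.
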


\begin{proof}
This is by iteration of the argument of Solution \ref{soln2}. Let $2\le m_1<m_2\le n$, so that
\begin{equation}\label{eq:7}
\Pr(D_{m_1}\cap D_{m_2}) = \Pr(D_{m_1}) \Pr(D_{m_2}\mid D_{m_1}).
\end{equation}
When passenger $m_1$ chooses a seat, seats $2,3,\dots,m_1-1$ have already been taken, 
together with one further seat
denoted $S_1$. Conditional on $D_{m_1}$, we have $S_1=m_1$, so that passenger $m_1$ is 
displaced to some later seat. When, subsequently, passenger $m_2$ chooses a seat, the seats $m_1+1,m_1+2,\dots,m_2-1$
are occupied and in addition one further seat denoted $S_2$.  The choices made so far
contain no information about $S_2$ other than it is
one of $1,m_2,m_2+1,\dots,n$.
Thus, $\Pr(D_{m_2}\mid D_{m_1})$
equals the (conditional) probability that $S_2=m_2$,
namely $1/(n-m_2+2)$. By \eqref{eq:7},
\begin{equation}\label{eq:8}
\Pr(D_{m_1}\cap D_{m_2}) = \frac1{n-m_1+2}\cdot\frac 1{n-m_2+2}=\Pr(D_{m_1})\Pr(D_{m_2}),
\end{equation}
so that $D_{m_1}$ and $D_{m_2}$ are independent. By the same argument iterated,
any finite subset of  $\{D_m: 2\le m\le n\}$ is independent.
\end{proof}

We summarise the above as follows. By Solution \ref{soln2} and Theorem \ref{thm0}, the $D_m$ are independent wth probabilities $p_m:=\Pr(D_m)$ given by
\begin{equation}\label{eq:9}
p_m=\frac 1{n-m+2},\qq m=2,3,\dots,n.
\end{equation}

We may construct the seating arrangement \lq backwards'. 
For $s=1,2,\dots,n$, we declare $s$ to be \emph{red} with probability
$1/s$, and \emph{black} otherwise, with different integers having independent colours.
We relabel seat $1$ as seat $n+1$, so that the seat-set is
now  $S=\{2,3,\dots,n+1\}$, and we declare seat $m$ to be red if the integer $s=n-m+2$ is red,
which occurs with probability $p_m$. The  passengers may now be seated. Passenger $1$ sits in the earliest red seat,
$R_1$ say; passenger $R_1$ sits in the next red seat, $R_2$ say, and so on; undisplaced passengers
sit in their assigned seats. 

\begin{remark}\label{rem1}
The harmonic progression of \eqref{eq:9} provokes a coupling of seat colours inspired by the theory of record values.
Let $\{U_s: 1\le s\le n\}$ be independent random variables with the uniform distribution on $[0,1]$.
The integer $s$ is called a \emph{record value} if $U_s>U_r$ for $r<s$. 
It is standard\/\footnote{See, for example, \cite[Prob.\ 7.11.36]{otep2e,prp3e}.} that the 
events $V_s=\{\text{\rm$s$ is a record value}\}$, for $1\le s \le n$,
are independent with $\Pr(V_s)=1/s$. The seats may be coloured as follows.
Seat $m \in S$ is coloured red if and only if $n-m+2$ is a record value.
\end{remark}

\begin{remark}\label{rem:2}
What happens in the limit as $n\to\oo$, and in particular how does Figure \ref{fig:1} evolve when scaled by 
the factor $1/n$?
Let $S_i$ be the seat occupied by passenger $i$, with the convention that the seat labelled $1$ is relabelled $n+1$
(see Figure \ref{fig:1}).
Note that $S_i \ge i$ for all $i$, and $S_i=i$ if and only if $i \ne 1$ and $i$ is correctly seated.
The displacements $D_i:= S_i-i$ are a measure of the degree of disorder in the plane.
It turns out that the normalised vector $D=(D_1/n,D_2/n,\dots,D_n/n)$, when re-ordered in
decreasing order (so that the first entry is the maximum displacement),  has  a limiting distribution
in the limit as $n \to\oo$. The limit is the so-called Poisson--Dirichlet distribution, which crops up in 
a variety of settings including probabilistic number theory, the theory of random permutations,
and population genetics\/\footnote{See Terry Tao's blog \cite{tao2013}.}. 
The key reason for this is illustrated in Figure \ref{fig:1}: $S_1$ is uniformly distributed on the set of seats;
passenger $S_1$ sits in a seat labelled $S_2$ that is uniform on the seats to the right of $S_1$; and so on.
  
\end{remark}

Much mathematics proceeds by identification and solution of a sequence of smaller problems, which are obtained 
from the original by  a process of distillation
of intrinsic difficulties. In this way, even teasers may play significant roles in solving challenging
problems.  The lost boarding pass problem 
(more specifically, the argument of Solutions \ref{soln1} and \ref{soln2})
has contributed to work by Johan W\"astlund \cite{wast}
on the random assignment problem and the so-called Buck--Chan--Robbins urn process. 

\section{Many lost boarding passes}\label{sec:many}

A more general version of Questions \ref{q1} and \ref{q2} has been studied by Henze and Last \cite{henze}.

\begin{question}\label{q3}
Let $k \ge 1$. A plane has $n\ge k+1$ seats, and is fully booked. 
A line of $n$ people board one by one. Each has an assigned seat, but the first $k$ passengers 
have
lost their boarding passes and sit in seats chosen at random from those available. 
Subsequent passengers sit in their assigned seats if free, 
and in randomly chosen empty seats if not. Let $k+1 \le m\le n$.
What is the probability  that
passenger $m$ finds seat $m$ already occupied?
\end{question}

\begin{figure}
\includegraphics[width=0.9\textwidth]{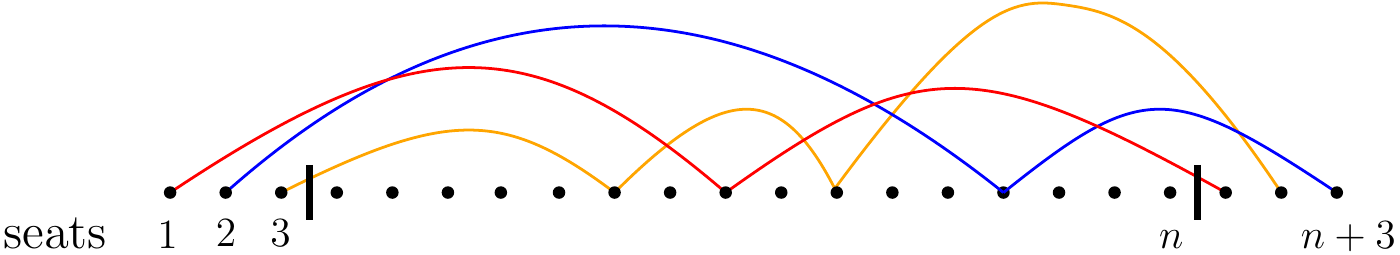}
\caption{With the first $3$ boarding passes lost, each of the first $3$ passengers
initiates a chain of displacements.}\label{fig:2}
\end{figure}

The solution using symmetry follows 
the route of Solution \ref{soln2}.

\begin{solution}\label{soln3}
Let $k+1\le m\le n$. 
When
passenger $m$ chooses a seat, the seats $k+1,k+2,\dots,m-1$ are already  taken (since seat $r\in\{k+1,k+2,\dots,m-1\}$, 
if free, would have been claimed earlier by passenger $r$), and $k$ further seats also. 
By symmetry, these further seats are equally likely to be any $k$-subset of the $n-m+k+1$ seats labelled
$1,2,\dots,k,m,m+1,\dots,n$. Therefore, seat $m$ is occupied with probability $k/(n-m+k+1)$.
\end{solution}

In particular, the probability that passenger $n$ sits in his or her assigned seat is 
$$
1-\frac k{k+1}= \frac 1{k+1}.
$$
We turn finally to independence.
As in Section \ref{sec:int}, let $D_m$ be the event that passenger $m$ finds seat $m$ already occupied.

\begin{theorem}\label{thm3}
The events $D_{k+1},D_{k+2},\dots,D_n$ are independent.
\end{theorem}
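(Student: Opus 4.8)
The plan is to extend the conditional argument behind Theorem \ref{thm0} by tracking, as a Markov chain, the set of occupied seats that are not yet \lq forced'. Fix $m$ with $k+1\le m\le n$ and, at the instant passenger $m$ is about to sit, write the occupied seats as the forced block $\{k+1,\dots,m-1\}$ together with a \emph{floating set} $T_m$ of $k$ seats lying in the index set $I_m:=\{1,\dots,k,m,m+1,\dots,n\}$, which has $|I_m|=n-m+k+1$. By construction $D_m=\{m\in T_m\}$, so Solution \ref{soln3} is exactly the assertion that $T_m$ is uniform over the $k$-subsets of $I_m$. The independence of the $D_m$ will follow if I can upgrade this to a \emph{conditional} uniformity: given any admissible values of $D_{k+1},\dots,D_{m-1}$, the set $T_m$ is still uniform over the $k$-subsets of $I_m$. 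Granting this, $\Pr(D_m\mid D_{k+1},\dots,D_{m-1})=\Pr(m\in T_m\mid\cdots)=k/(n-m+k+1)=\Pr(D_m)$ irrespective of the conditioning, and the chain rule then yields independence of $D_{k+1},\dots,D_n$.

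First I would record the dynamics of the floating set. If $m\notin T_m$ (the event $D_m$ fails) then seat $m$ is free, passenger $m$ takes it, and $T_{m+1}=T_m$ regarded now as a subset of $I_{m+1}=I_m\setminus\{m\}$. If $m\in T_m$ (the event $D_m$ holds) then passenger $m$ is displaced and chooses a seat $c$ uniformly among the empty seats $I_m\setminus T_m\subseteq I_{m+1}$, so that $T_{m+1}=(T_m\setminus\{m\})\cup\{c\}$. In either case $T_{m+1}$ is a $k$-subset of $I_{m+1}$, and the fresh randomness used at this step is independent of the past. The base case is then immediate: passengers $1,\dots,k$ all carry lost passes, so they occupy seats by a uniform random injection of $\{1,\dots,k\}$ into $\{1,\dots,n\}$; hence $T_{k+1}$ is uniform over the $k$-subsets of $I_{k+1}=\{1,\dots,n\}$.

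For the inductive step I would prove the key lemma: if $T_m$ is uniform over the $k$-subsets of $I_m$, then conditional on either value of $D_m$ the set $T_{m+1}$ is uniform over the $k$-subsets of $I_{m+1}$. When $D_m$ fails this is clear, since conditioning a uniform $k$-subset on avoiding $m$ gives a uniform $k$-subset of $I_{m+1}$ and $T_{m+1}=T_m$. When $D_m$ holds, $T_m=\{m\}\cup R$ with $R$ uniform over the $(k-1)$-subsets of $I_{m+1}$, and $T_{m+1}=R\cup\{c\}$ is formed by adjoining a uniformly chosen element $c$ of $I_{m+1}\setminus R$; a short count shows each $k$-subset of $I_{m+1}$ then arises with the common probability $1/\binom{|I_{m+1}|}{k}$. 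Feeding this lemma through the induction---using that $D_m$ is a function of $T_m$ and that the passage from $T_m$ to $T_{m+1}$ uses only fresh randomness independent of the earlier events---delivers the conditional uniformity claimed above.

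I expect the one genuinely substantive point to be the displacement case of the key lemma: verifying that the map \lq delete $m$, then insert a uniform element of the complement' sends the uniform distribution on $k$-subsets containing $m$ to the uniform distribution on all $k$-subsets of $I_{m+1}$. This is the mechanism that erases any trace of the conditioning and keeps the floating set exchangeable, and it is precisely what the verbal \lq by symmetry' of Solution \ref{soln3} must be made to mean under repeated conditioning; the remaining steps are bookkeeping.
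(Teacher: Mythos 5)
Your proof is correct, and it reaches the theorem by the same underlying mechanism as the paper --- namely, that conditional on any outcome of $D_{k+1},\dots,D_{m-1}$, the $k$ \lq floating' occupied seats remain uniformly distributed over the $k$-subsets of $\{1,\dots,k,m,m+1,\dots,n\}$, so that $\Pr(D_m\mid D_{k+1},\dots,D_{m-1})=k/(n-m+k+1)$ regardless of the conditioning. The difference is one of rigour rather than of route: the paper simply iterates the verbal symmetry argument of Solution \ref{soln3} (the choices made so far contain no information about the further occupied seats), computing $\Pr(D_{m_1}\cap D_{m_2})$ and its higher analogues directly, whereas you isolate and verify the one step that this appeal to symmetry conceals, namely that the update \lq delete $m$, then insert a uniform element of the complement' maps the uniform distribution on $k$-subsets containing $m$ to the uniform distribution on all $k$-subsets of the reduced index set. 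Your count checks out: writing $N=|I_{m+1}|=n-m+k$, each target $k$-subset $A$ arises from $k$ choices of the inserted element $c\in A$, each with probability $\binom{N}{k-1}^{-1}(N-k+1)^{-1}$, and $k\binom{N}{k-1}^{-1}(N-k+1)^{-1}=\binom{N}{k}^{-1}$ as required. The base case (a uniform $k$-subset of $\{1,\dots,n\}$ after the $k$ passholders have sat) and the observation that passenger $m$'s choice is fresh randomness independent of the history are both in order, so the induction closes and the chain rule gives full independence. What your version buys is an explicit justification of the paper's ``by the same argument''; what the paper's version buys is brevity, at the cost of leaving the persistence of exchangeability of the floating set under repeated conditioning to the reader's intuition.
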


\begin{proof}
This follows that of Theorem \ref{thm0}. For example, by the same argument, \eqref{eq:8} becomes
\begin{align*}
\Pr(D_{m_1}\cap D_{m_2}) &= \frac k{n-m_1+k+1}\cdot\frac k{n-m_2+k+1}\\
&=\Pr(D_{m_1})\Pr(D_{m_2}),
\end{align*}
and similarly for intersections of three or more of the events $D_m$.
\end{proof}

The overall seating program is now as follows, using the language of the end of Section \ref{sec:int}. 
We have $S=\{k+1,k+2,\dots,n,n+1,\dots,n+k\}$, and seat $m\in S$ is declared \emph{red} with
probability 
$$
p_m= \begin{cases} \dfrac k{n-m+k+1} &\text{if } m \le n,\\
1 &\text{if } m>n,
\end{cases}
$$
and \emph{black} otherwise, different seats being coloured independently of one another. 
There are $k$ distinct shades $\rho_1,\rho_2,\dots,\rho_k$ of red. Each red seat $m$ satisfying $m\le n$ 
is shaded with a uniformly random shade, chosen
independently of the shades of other red seats.  As for the seats labelled $n+1,n+2,\dots,n+k$, for
each $i$ exactly one 
of these is shaded 
$\rho_i$, with the distribution of shades being given by a random permutation of the labels. 

The passengers may now be seated according to the following algorithm.
Passenger $1$ sits in the earliest seat shaded $\rho_1$; the displaced passenger 
sits in the next seat shaded $\rho_1$, and so on.
The process is repeated for passengers $2,3,\dots,k$, and finally
the black seats are occupied by their correct passengers.

\begin{remark}\label{rem:3}
In Remark \ref{rem:2} we identified the limit of the seating process as $n\to\oo$, with displacements
scaled by $1/n$. With $k \ge 1$ lost boarding passes, the diagram of Figure \ref{fig:2}
converges, when rescaled,  to $k$ independent copies of that of Figure \ref{fig:1}.
\end{remark}

One may also consider the more general situation when the lost boarding passes may not be consecutive.
The associated probabilities can be calculated using the methods and conclusions given above.

\section*{Acknowledgement}
We are grateful to David Bell for having drawn our attention to Question \ref{q1}
in 2000 or earlier.

\providecommand{\bysame}{\leavevmode\hbox to3em{\hrulefill}\thinspace}
\providecommand{\MR}{\relax\ifhmode\unskip\space\fi MR }
\providecommand{\MRhref}[2]{%
  \href{http://www.ams.org/mathscinet-getitem?mr=#1}{#2}
}
\providecommand{\href}[2]{#2}


\end{document}